\def\preprint{}
\preprint\undefined
\journal{Linear Algebra and its Applications}
\newtheorem{theorem}{Theorem}[section]
\newtheorem{lemma}[theorem]{Lemma}
\newtheorem{proposition}[theorem]{Proposition}
\newtheorem{corollary}[theorem]{Corollary}
\newtheorem{definition}[theorem]{Definition}
\newtheorem{remark}[theorem]{Remark}
\preprint\undefined
    \newenvironment{frontmatter}{}{}
    \newenvironment{keyword}{\begin{quote}\minisec{Keywords}}{\end{quote}}
    \newcommand{\sep}{, }
    \newcommand{\MSC}[1][]{#1 MSC: }
\newcommand{\dotp}[2]{\langle #1, #2 \rangle}
\newcommand{\norm}[2][2]{\| #2 \|_{#1}}
\newcommand{\Anorm}[1]{\norm[A]{#1}}
\newcommand{\vs}[2]{ {#1}^{#2} } 
\newcommand{\Kvs}[1]{ \vs{\mathbb{K}}{#1} } 
\newcommand{\mvs}[3]{ {#1}^{#2 \times #3}} 
\newcommand{\Rmvs}[2]{ \mvs{\mathbb{R}}{#1}{#2} }
\newcommand{\Kmvs}[2]{ \mvs{\mathbb{K}}{#1}{#2} }
\newcommand{\Krylov}[1]{ \mathcal{K}_{#1} }
\newcommand{\Span}{ \mathop{\mathrm{span}} }
\newcommand{\range}{ \mathop{\mathrm{range}} }
\newcommand{\conj}[1]{\overline{#1}}
\newcommand{\dist}{ \mathrm{dist} }
\newcommand{\diag}{ \mathrm{diag} }
\newcommand{\eff}{\mathrm{eff}}
\newlength{\equalssignwidth}
\newcommand{\scinum}[2]{\ensuremath{#1 \cdot 10^{#2}}}
\begin{document}

\begin{frontmatter}

\title{The Deflated Conjugate Gradient Method: %
    Convergence, Perturbation and Accuracy
}

\ifx\preprint\undefined
\author[wtal]{K.~Kahl}
\ead{kkahl@math.uni-wuppertal.de}

\author[wtal]{H.~Rittich}
\ead{rittich@math.uni-wuppertal.de}

\address[wtal]{
    Fakult\"at für Mathematik und Naturwissenschaften,
    Bergische Universit\"at Wuppertal, 42097 Wuppertal,
    Germany
}
\else
    {
    \renewcommand{\thefootnote}{\fnsymbol{footnote}}
    \author{K.~Kahl\footnotemark[1], H.~Rittich\footnotemark[1]}
    \date{October 17, 2016}
    \maketitle
    \footnotetext[1]{
        Fakult\"at für Mathematik und Naturwissenschaften,
        Bergische Universit\"at Wuppertal, 42097 Wuppertal,
        Germany \\
        e-mail: kkahl@math.uni-wuppertal.de (K.~Kahl), 
            rittich@math.uni-wuppertal.de (H.~Rittich)
    }
    }
\fi

\begin{abstract}%
  Deflation techniques for Krylov subspace methods have seen a lot of
  attention in recent years. They provide means to improve the convergence
  speed of these methods by enriching the Krylov subspace with a deflation
  subspace. 
  The most common approach for the construction of deflation
  subspaces is to use (approximate) eigenvectors, but also more general
  subspaces are applicable.

  In this paper we discuss two results concerning the accuracy requirements
  within the deflated CG method.  First we show that the effective condition
  number which bounds the convergence rate of the deflated conjugate gradient
  method depends asymptotically linearly on the size of the perturbations in
  the deflation subspace.  Second, we discuss the accuracy required in
  calculating the deflating projection.  This is crucial concerning the
  overall convergence of the method, and also allows to save some
  computational work.

  To show these results, we use the fact that as a projection approach
  deflation has many similarities to multigrid methods.  In particular, recent
  results relate the spectra of the deflated matrix to the spectra of the
  error propagator of twogrid methods.  In the spirit of these results we show
  that the effective condition number can be bounded by the constant of a weak
  approximation property.
\end{abstract}

\begin{keyword}
    conjugate gradients\sep
    deflation\sep
    multigrid\sep
    convergence\sep
    perturbation

    \MSC[2008] 65F08\sep 65F10
\end{keyword}

\end{frontmatter}


\section{Introduction}
Consider solving the linear system of equations
\begin{equation} \label{basic_linear_system:eq}
    A x = b
    \,,
\end{equation}
where $A \in \Kmvs{n}{n}$ ($\mathbb{K} = \mathbb{R}$ or $\mathbb{K} =
\mathbb{C}$) is self-adjoint and positive definite and $x, b \in \Kvs{n}$. In
this paper we are interested in the case where the matrix $A$ is
large and sparse. The conjugate gradient (CG) method
\cite{GLMatrix1989,HSMethods1952,SaaIterative2003} is an iterative method which is often well
suited to solve these systems. 
The speed of convergence of the CG method depends
on the distribution of its eigenvalues and the right hand side.
Estimates for the speed of convergence in terms of the condition of the matrix
$A$ exist \cite{SaaIterative2003,SVRate1986}. 
When the condition number $\kappa$ is large it can become mandatory to
precondition the linear system such that a satisfactory speed of convergence
can be guaranteed.

One possibility to precondition the CG method is via deflation as
introduced by Nicolaides~\cite{NicDeflation1987} and Dostal~\cite{DosConjugate1988} (see also
\cite{AMNWDeflated2010,CWAnalysis1997,FVConstruction2001,GGLNFramework2013,LuesLocal2007,SYEGDeflated2000,SPMConjugate1989}).
The basic idea of deflation is to ``hide''
certain parts of the spectrum of the matrix $A$ from the CG method itself,
such that the CG iteration ``sees'' a system that has a much smaller
condition number than $A$.
The part of the spectrum that is hidden from CG is determined by the
{\em deflation subspace} $\mathcal{S} \subseteq \Kvs{n}$
and the improvement of the convergence rate of the deflated CG method hinges
solely on the choice of $\mathcal{S}$.

One viable and widely used approach for deflation consists of spanning
$\mathcal{S}$ by the eigenvectors corresponding to the smallest eigenvalues
\cite{SYEGDeflated2000}.  This then hides the smallest eigenvalues from the
spectrum of $A$. On the other hand, already in \cite{NicDeflation1987} a
different choice of $\mathcal{S}$ has been used and further examples for other
deflation subspaces can be found in \cite{FVConstruction2001,LuesLocal2007}.

Different two-level
approaches, including deflation and twogrid methods 
\cite{BHMMultigrid2000,HacMulti1985,TOSMultigrid2001},
have been compared in \cite{NVComparison2006,TNVEComparison2009} and equivalences 
between these apparently different methods have been established.
In continuation of these efforts to connect different two-level approaches, we show how to 
use theory developed for multigrid methods to
analyze the deflated CG method.

After giving a short introduction to the deflated CG method in
Section~\ref{sec:deflation} 
we give an overview on how to apply multigrid theory in the deflation context
in Section~\ref{sec:convergence-analysis}.

Based on these theoretical considerations we answer two different questions
concerning accuracy that arise in the deflation approach.

First, in Section~\ref{sec:inexact-deflation} 
we analyze the situation where the deflation subspace
$\mathcal{S}$ is only known up to a perturbation and give a
bound on the effective condition number with respect to the size of
the perturbation. 
This result is of particular interest in the case where the deflation 
subspace is spanned by
eigenvectors corresponding to the smallest eigenvalues of $A$.
In many practical applications eigenvectors are unknown and need to be
approximated and thus the resulting deflation subspace can be thought
of as a perturbation of the deflation subspace that uses the exact eigenvectors.
This result can in particular be applied to the analysis of methods like
eigCG \cite{SOComputing2010} and similar methods 
\cite{AMNWDeflated2010,CWAnalysis1997,SPMConjugate1989}
where the eigenvectors are approximated numerically.

Second, we consider the situation when the dimension of $\mathcal{S}$ is
large.  In this case special care is needed within a deflation type method as
the deflating projection now involves the solution of 
a large linear system with the matrix $V^*AV$, where the columns of $V$ form a
basis of $\mathcal{S}$.
We discuss the accuracy requirements on the solution of this system
to ensure proper convergence of the deflated CG method based on results from
\cite{SSTheory2003,ESInexact2004} on inexact Krylov subspace methods.
This reveals a way to reduce computational
work needed to perform the deflated CG method.

Some numerical experiments confirming and illustrating the theory are given 
in Section~\ref{sec:experiments}.

We conclude this introduction by fixing basic notation used throughout the paper. 
For $z \in \Kvs{n}$ its residual $r \in \Kvs{n}$ is given by
$ r = b - A z$, the error by $ e = x - z$ where
$x$ is the solution of \eqref{basic_linear_system:eq}.
Note that 
$A e = r$.

Let $\dotp{v}{w} = \sum_{i = 1}^n \conj{w_i}\, v_i$ be the euclidean inner
product of $v$ and $w$, $\norm{v} = \dotp{v}{v}^{1/2}$ the euclidean norm of
$v$.
Since $A$ is self-adjoint and positive definite the $A$-inner
product and the $A$-norm exist and are given by
\[
    \dotp{v}{w}_A := \dotp{A v}{w}
    \quad \text{and} \quad
    \Anorm{v} := \dotp{v}{v}_A^{1/2}
    \,.
\]
Let $q_1, q_2, \dots, q_n$ be an orthonormal basis of eigenvectors
of the matrix $A$, s.t.\ $\lambda_1 \ge \lambda_2 \ge \dots \ge \lambda_n$ are
the corresponding eigenvalues.

\section{Review of Deflated CG}\label{sec:deflation}

Let $x_0$ be an initial guess,
$r_0 = b - A x_0$ and the $i$th Krylov subspace be denoted by
$\Krylov{i}(A, r_0) := \Span\{ r_0, A r_0, A^2 r_0, \dots, A^{i-1} r_0\}$.
The $i$-th CG iterate is determined such that $x_i \in x_0 + \Krylov{i}(A, r_0)$
and the error $e_i = x - x_i$ is minimized in the $A$-norm
(cf.~\cite{SaaIterative2003}), i.e.,
\begin{equation}
    \label{eq:a-norm-error-cg}
    \Anorm{ e_i } 
    = \min \{ \Anorm{ x - z } : z \in x_0 + \Krylov{i}(A, r_0) \}
    \,.
\end{equation}
Note that \eqref{eq:a-norm-error-cg} is just the distance in the $A$-norm
between $x$ and the affine subspace $x_0 + \Krylov{i}(A, r_0)$.  The convergence
of the method can be slow in case of unfavorable spectral properties of the
matrix $A$ \cite{SVRate1986}. 
The idea of deflation is to modify the CG method such that
the iterates $x_i$ are equal to the solution $x$ on a given subspace
$\mathcal{S} \subseteq \Kvs{n}$ in the sense that
the $A$-orthogonal projections of $x_i$
and $x$ onto $\mathcal{S}$ coincide (and are thus identical for all $i$).
For proper choices of $\mathcal{S}$ this will improve the speed of
convergence since the affine subspace which contains the iterates may now be
much closer to the solution $x$ than the original one. We give a rigorous
description of the deflated CG method in the remainder of this section.

Let $\mathcal{S}^{\perp_A} = (A \mathcal{S})^\perp$ 
be the $A$-orthogonal complement of a
given subspace 
$\mathcal{S} \subseteq \Kvs{n}$.
We can split the solution $x$ into a
component in $\mathcal{S}$ and a component in $\mathcal{S}^{\perp_A}$
via the $A$-orthogonal projection $\pi_A(\mathcal{S}) \in \Kmvs{n}{n}$ onto
$\mathcal{S}$, i.e.,
\begin{equation}
    \label{eq:splitting-of-x}
    x = (I - \pi_A(\mathcal{S})) x + \pi_A(\mathcal{S}) x
    \,.
\end{equation}
Let $V \in \Kmvs{n}{m}$ be a matrix such that its columns form a basis of the
subspace $\mathcal{S}$. Since
\begin{equation}
    \label{eq:solution-in-S}
    \pi_A(\mathcal{S}) x
    = V (V^* A V)^{-1} V^* A x
    = V (V^* A V)^{-1} V^* b
\end{equation}
we can compute $\pi_A(\mathcal{S}) x$---the second term in the right hand side
of \eqref{eq:splitting-of-x}---without explicit knowledge of $x$.
The first term of \eqref{eq:splitting-of-x} can be computed from a solution
$\hat x$ of the singular linear system
\begin{equation}
    \label{eq:deflated_system}
    A (I - \pi_A(\mathcal{S})) \hat x = (I - \pi_A(\mathcal{S}))^* b
    \,,
\end{equation}
which we call the {\em deflated (linear) system}. 
For the sake of completeness we prove this in the following lemma.
Its statements can be found, e.g., in \cite{FVConstruction2001,LuesLocal2007}.
\begin{lemma}%
    \label{lem:defl-sys-prop}%
    Using the definitions from above we have:
    \begin{enumerate}[leftmargin=3em,label=(\roman*)]
        \item \label{itm:symmetry_relation}
        The following equalities hold
        \begin{equation}
            \label{eq:symmetry_relation}
            A (I - \pi_A(\mathcal{S}))
            = (I - \pi_A(\mathcal{S}))^* A 
            = (I - \pi_A(\mathcal{S}))^* A (I - \pi_A(\mathcal{S}))
            \,.
        \end{equation}
        \item \label{itm:spd_deflated_system}
        The matrix $A (I - \pi_A(\mathcal{S}))$ is self-adjoint and
        positive semi-definite.
        \item \label{itm:consistent_system}            
        The deflated system \eqref{eq:deflated_system} is consistent, 
        i.e., the right hand side 
        $(I - \pi_A(\mathcal{S}))^* b$ is in the range of
        $A (I - \pi_A(\mathcal{S}))$.
        This implies that the system has at least one solution.
        \item \label{itm:solve-via-deflated-system}
        If $\hat x$ is a solution of the deflated system
        \eqref{eq:deflated_system} then
        \begin{equation}
            \label{eq:solve-via-deflated-system}
            (I - \pi_A(\mathcal{S})) \hat x = (I - \pi_A(\mathcal{S})) x
            \,,
        \end{equation}
        where $x$ is the solution of the linear system $Ax = b$.
    \end{enumerate}
\end{lemma}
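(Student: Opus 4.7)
The plan is to write $P := \pi_A(\mathcal{S})$ throughout for brevity and to start from the explicit formula $P = V(V^*AV)^{-1}V^*A$ provided by \eqref{eq:solution-in-S}. The single observation that powers everything is that
\[
    AP = AV(V^*AV)^{-1}V^*A
\]
is manifestly self-adjoint, since $A=A^*$ and $V^*AV = (V^*AV)^*$. Equivalently, $AP = P^*A$. Together with the identity $P^2=P$ (which holds because $P$ is a projection), this is essentially all we need.

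For part \ref{itm:symmetry_relation}, I would first deduce $A(I-P) = A - AP = A - P^*A = (I-P)^*A$, giving the first equality. For the second, I would use $(I-P)^2 = I - P$ to write
\[
    (I-P)^*A(I-P) = \bigl(A(I-P)\bigr)(I-P) = A(I-P)^2 = A(I-P),
\]
where the first step applies the equality just established. Part \ref{itm:spd_deflated_system} then follows directly: the representation $A(I-P) = (I-P)^*A(I-P)$ is visibly self-adjoint, and for any $v \in \Kvs{n}$,
\[
    \dotp{A(I-P)v}{v} = \dotp{A(I-P)v}{(I-P)v} = \norm{(I-P)v}_A^2 \ge 0,
\]
using $\dotp{\cdot}{\cdot}_A = \dotp{A\cdot}{\cdot}$.

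For part \ref{itm:consistent_system}, the cleanest route is to observe that the exact solution $x$ of $Ax = b$ already solves the deflated system: by part \ref{itm:symmetry_relation},
\[
    A(I-P)x = (I-P)^*Ax = (I-P)^*b,
\]
so $(I-P)^*b$ lies in $\range A(I-P)$ and \eqref{eq:deflated_system} is consistent. Finally, for part \ref{itm:solve-via-deflated-system}, if $\hat x$ is any solution of the deflated system, subtracting the identity $A(I-P)x=(I-P)^*b$ just derived yields $A(I-P)(\hat x - x) = 0$. Since $A$ is positive definite, hence invertible, the kernel of $A(I-P)$ coincides with the kernel of $(I-P)$, so $(I-P)(\hat x - x) = 0$, which is exactly \eqref{eq:solve-via-deflated-system}.

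There is no real obstacle here; the only conceptual point that requires care is the order of the manipulations in part \ref{itm:symmetry_relation}, where one must avoid the circular-looking step of invoking the third expression to prove the first. Writing things in the order above---first $AP = P^*A$ from self-adjointness of $AP$, then the projection identity $(I-P)^2 = I - P$---keeps the argument linear. Once \ref{itm:symmetry_relation} is in hand, parts \ref{itm:spd_deflated_system}--\ref{itm:solve-via-deflated-system} are immediate consequences together with the invertibility of $A$.
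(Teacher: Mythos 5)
Your proof is correct and follows essentially the same route as the paper: the self-adjointness of $AV(V^*AV)^{-1}V^*A$ plus the projection identity give \ref{itm:symmetry_relation}, and the remaining parts follow as in the paper's argument. The only (immaterial) difference is in \ref{itm:consistent_system}, where you exhibit $x$ itself as a solution of the deflated system, while the paper proves the stronger range identity $\range[A(I-\pi_A(\mathcal{S}))]=\range[(I-\pi_A(\mathcal{S}))^*]$; both settle consistency in one line.
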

\begin{proof} 
    Since $\pi_A(\mathcal{S}) = V (V^* A V)^{-1} V^* A$ we have
    \[
        A (I - \pi_A(\mathcal{S}))
        = A - A V (V^* A V)^{-1} V^* A
        = (I - \pi_A(\mathcal{S}))^* A
        \,,
    \]
    and since $I - \pi_A(\mathcal{S})$ is a projection we also get
    \[
        A (I - \pi_A(\mathcal{S})) 
        = A (I - \pi_A(\mathcal{S})) (I - \pi_A(\mathcal{S})) 
        = (I - \pi_A(\mathcal{S}))^* A (I - \pi_A(\mathcal{S}))
        \,.
    \]
    This proves \ref{itm:symmetry_relation} and also shows that 
    $A (I - \pi_A(\mathcal{S}))$ is self-adjoint.
    Using \eqref{eq:symmetry_relation} and due to
    $\dotp{B^* A B x}{x} = \dotp{A B x}{B x} = \dotp{Bx}{Bx}_A$,
    $B \in \Kmvs{n}{n}$ we have
    \[
        \dotp{A (I - \pi_A(\mathcal{S})) x}{x}
        = \dotp{(I - \pi_A(\mathcal{S})) x}{(I - \pi_A(\mathcal{S})) x}_A
        = \Anorm{ (I - \pi_A(\mathcal{S})) x }^2
        \ge 0
    \]
    which gives \ref{itm:spd_deflated_system}.

    Again due to \eqref{eq:symmetry_relation} and the fact that $A$ has full
    rank we have
    \[
        \range \big( A (I - \pi_A(\mathcal{S})) \big)
        = \range \big( (I - \pi_A(\mathcal{S}))^* A \big)
        = \range \big( (I - \pi_A(\mathcal{S}))^* \big)
        \,.
    \]
    Hence the system \eqref{eq:deflated_system}
    is consistent which proves \ref{itm:consistent_system}.
    To show \ref{itm:solve-via-deflated-system}
    we use \eqref{eq:deflated_system} and \eqref{eq:symmetry_relation}
    yielding
    \[
        A (I - \pi_A(\mathcal{S})) \hat x
        = (I - \pi_A(\mathcal{S}))^* b
        = (I - \pi_A(\mathcal{S}))^* A x
        = A (I - \pi_A(\mathcal{S})) x
        \,.
    \]
    Multiplying with $A^{-1}$ from the left we obtain
    \eqref{eq:solve-via-deflated-system}.
\qquad\end{proof}

To sum up,
given a solution $\hat x$ for the
deflated system \eqref{eq:deflated_system}
we can compute the first part of the splitting
\eqref{eq:splitting-of-x} by \eqref{eq:solve-via-deflated-system} and the
second part by using the formula \eqref{eq:solution-in-S}.
Therefore we obtain the solution $x$ for the original system as
\begin{align*}
    x &= (I - \pi_A(\mathcal{S})) x + \pi_A(\mathcal{S}) x \\
      &= (I - \pi_A(\mathcal{S})) \hat x + V (V^* A V)^{-1} V^* b
    \,.
\end{align*}
This relation allows us to compute approximations to the solution of the
original system by computing an approximation to the solution $\hat x$ of the
deflated system \eqref{eq:deflated_system}.
Since $A (I - \pi_A(\mathcal{S}))$ is positive semi-definite 
we can apply the CG method. The fact that the matrix is singular is no impediment to the
standard CG iteration as long as~\eqref{eq:deflated_system} is consistent
(cf.~\cite{KaaPreconditioned1988}), which has been shown to be the case in
Lemma~\ref{lem:defl-sys-prop}.

For the purpose of analyzing the method we think of deflated
CG as applying the standard CG algorithm 
to the deflated system \eqref{eq:deflated_system}
with the matrix $A (I-\pi_A(\mathcal{S}))$.
There are various other mathematically equivalent formulations of deflated CG
(for an overview see
\cite{GGLNFramework2013}) for which our analysis holds as well.

Let $\mu_1 \ge \dots \ge \mu_n \ge 0$ be the eigenvalues of the self-adjoint and
positive semi-definite matrix $A (I - \pi_A(\mathcal{S}))$.
Let $k \in \mathbb{N}$ denote the largest index such that 
$\mu_k \not= 0$. The errors of the CG iterates then satisfy
\begin{equation}\label{eq:convergence_estimate_dCG}
	\Anorm{ e_i }
	\le 2 \left( \frac{ \sqrt{\kappa_\eff} - 1 }{ \sqrt{\kappa_\eff} + 1} \right)^i
	\Anorm{ e_0 }
	\quad \text{for } i = 0, 1, 2, \ldots,
\end{equation}
where $\kappa_\eff = \tfrac{\mu_1}{\mu_k}$, see 
\cite{FVConstruction2001,SYEGDeflated2000}.
We call $\kappa_\eff$ the effective condition number of the deflated matrix $A
(I - \pi_A(\mathcal{S}))$ to distinguish it from the condition number $\kappa$
of the original matrix $A$.
Thus a bound on the convergence rate of deflated CG
can be obtained by estimating
the largest and smallest {\em non-zero}
eigenvalue of the matrix $A (I - \pi_A(\mathcal{S}))$.

\section{Convergence Analysis}
\label{sec:convergence-analysis}
We estimate the effective condition number $\kappa_\eff$
of the matrix $A (I - \pi_A(\mathcal{S}))$
in terms of a quantity
which arises in the {\em weak approximation property} used in
multigrid theory.

\begin{definition}
\label{def:wap}
A subspace $\mathcal{S} \subseteq \Kvs{n}$ fulfills the 
{\em weak approximation property with constant $K \ge 0$} if
$K$ is the \emph{smallest} number s.t.
\begin{equation}
    \label{eq:weak_approx}
    \norm{ x - \pi(\mathcal{S}) x }^2
    \le \frac{K}{ \norm{A} } \Anorm{ x }^2
    \quad \text{for all } x \in \Kvs{n}
    \,.
\end{equation}
Here, $\pi(\mathcal{S})$ denotes the $\ell_2$-orthogonal projection onto
$\mathcal{S}$.
\end{definition}

Note that $\norm{ x - \pi(\mathcal{S}) x }$ is 
the {\em $\ell_2$-distance between $x$ and the subspace $\mathcal{S}$}
defined by
\begin{equation}
    \label{eq:def-subspace-dist}
    \dist(\mathcal{S}, x)_2
    = \min_{y \in \mathcal{S}} \norm{ x - y }
    \,.
\end{equation}
Up to a scaling by the diagonal entries of $A$
this definition coincides with the definition of the weak approximation
property found in the multigrid literature (see, e.g., 
\cite{BraAlgebraic1986,BCF+Algebraic2001,RSAlgebraic1987,StuIntroduction2001}).
It is called ``weak'' because it is only
sufficient for a two-level convergence theory 
\cite[Section~4.5]{RSAlgebraic1987} but not for a multilevel one.
Note that any subspace $\mathcal{S}$ fulfills a weak approximation 
property, however $K$ may be large.
In order to guarantee fast twogrid convergence one is 
interested in subspaces which admit a small value for $K$.
Uniform bounds for $K$ exist for particular families of matrices and certain
subspaces and can be directly derived from the results in, e.g.,
\cite{BraAlgebraic1986,BCF+Algebraic2001,RSAlgebraic1987,StuIntroduction2001}.  Such
families are typically different levels of discretization of a continuous
operator.

To derive the bound for the effective condition number we
start with the following auxiliary result.
\begin{lemma}
    \label{eq:ew-A-and-orth-projection}
    Let $\{ 0 \} \not= \mathcal{S} \subseteq \Kvs{n}$ be a subspace
    and $k := n - \dim(\mathcal{S})$.
    Furthermore, let $v_1, \dots, v_n$ be a basis of $\Kvs{n}$ consisting of
    eigenvectors corresponding to the eigenvalues
    $\mu_1 \ge \dots \ge \mu_n \ge 0$ of $A (I - \pi_A(\mathcal{S}))$.

    Then the vectors $v_1, \dots, v_k, A v_{k+1}, \dots, A v_n$ form a basis
    of $\Kvs{n}$ consisting of eigenvectors 
    corresponding to the eigenvalues $\mu_1^{-1}, \dots, \mu_k^{-1}, 0, \dots, 0$
    of $(I-\pi(\mathcal{S})) A^{-1}$.
    Thus
    \[
        \sigma( (I-\pi(\mathcal{S})) A^{-1})
        = \{ \mu_1^{-1}, \dots, \mu_k^{-1}, 0 \}
        \,,
    \]
    where $\sigma((I-\pi(\mathcal{S})) A^{-1})$ is the spectrum of
    $(I-\pi(\mathcal{S})) A^{-1}$.
\end{lemma}
\begin{proof}
    See \cite[Theorem~2.1]{NotAlgebraic2010}, 
    cf.~\cite[Theorem 3.24]{GauRecycling2014}.
\end{proof}

As $\sigma(AB) = \sigma(BA)$
for general matrices $A, B \in \Kmvs{n}{n}$ we obtain the following result
from Lemma~\ref{eq:ew-A-and-orth-projection}.

\begin{corollary}
    \label{cor:sym-inv-ew-relation}
    Under the same assumptions as in Lemma~\ref{eq:ew-A-and-orth-projection} we
    have
    \[
        \sigma( A^{-1/2} (I - \pi(\mathcal{S})) A^{-1/2} ) =
        \{ \mu_1^{-1}, \dots, \mu_k^{-1}, 0 \}
        \,.
    \]
\end{corollary}

With Corollary~\ref{cor:sym-inv-ew-relation} we are now able to formulate the
main theorem of this section.

\begin{theorem}
    \label{thm:wap-estimate}
    Let $\mu_1 \ge \mu_2 \ge \dots \ge \mu_n \ge 0$ be the eigenvalues of 
    $A (I - \pi_A(\mathcal{S}))$ and $k$ the largest integer s.t. 
    $\mu_k \not= 0$.
    Furthermore, let the weak approximation property \eqref{eq:weak_approx} hold with constant $K$.
    Then
    \[
        \mu_1 \le \norm{A}
        \quad \text{and} \quad
        \mu_k = \tfrac{ \norm{A} }{ K }
        \,.
    \]
    And consequently the effective condition number $\kappa_\eff$ of the matrix
    $A (I - \pi_A(\mathcal{S}))$ fulfills
    \[
        \kappa_\eff \le K
        \,.
    \]
\end{theorem}
\begin{proof}
    We first show that $\norm{A}$ is an upper bound for $\mu_1$.
    As the matrix $A \pi_A(\mathcal{S}) = A V (V^* A V)^{-1} V^* A$ is
    positive semi-definite and thus
    \[
        \dotp{A (I - \pi_A(\mathcal{S})) x }{x}
        = \dotp{A x}{x} - \dotp{A \pi_A(\mathcal{S}) x}{x}
        \le \dotp{A x}{x}
        \,,
    \]
    we obtain, characterizing eigenvalues by Rayleigh quotients
    (see, e.g., \cite{WilAlgebraic1965}),
    \begin{equation}
    \label{eq:mu_upper_bound}
        \mu_1
        = \max_{v \in \Kvs{n} \setminus \{0\}}
        \frac{\dotp{A (I - \pi_A(\mathcal{S})) v}{v}}{%
            \dotp{v}{v}} 
        \le \max_{x \in \Kvs{n} \setminus \{0\} }
            \frac{ \dotp{A x}{x} }{ \dotp{x}{x} }
        = \lambda_1 = \norm{A}
        \,,
    \end{equation}
    where $\lambda_1$ was defined as the largest eigenvalue of $A$.

    We now prove that $\tfrac{ \norm{A} }{K}$ is equal to $\mu_k$. 
    From
    Corollary~\ref{cor:sym-inv-ew-relation} we see that
    \[
        \mu_k^{-1} = \max_{v \not= 0}
        \frac{\dotp{A^{-\frac 12}(I - \pi(\mathcal{S})) A^{-\frac 12}v}{v}}{%
            \dotp{v}{v}}
        = \max_{v \not= 0}
        \frac{\dotp{(I - \pi(\mathcal{S})) A^{-\frac 12}v}{A^{-\frac 12} v}}{%
            \dotp{v}{v}}
        \,.
    \]
    Substituting $v$ by $A^{\frac12} w$ and using $(I -
    \pi(\mathcal{S}))^2 = (I - \pi(\mathcal{S}))
    = (I - \pi(\mathcal{S}))^*$ yields
    \[
        \mu_k^{-1}
        = \max_{v \not= 0}
        \frac{\dotp{(I - \pi(\mathcal{S})) w}{w}}{%
        \dotp{A^{\frac12} w}{A^{\frac12} w}}
        = \max_{v \not= 0}
        \frac{\dotp{(I - \pi(\mathcal{S})) w}{(I - \pi(\mathcal{S}))w}}{%
        \dotp{A w}{w}}
        \,.
    \]
    And by using the definition of the weak approximation property \eqref{eq:weak_approx} 
    we have that $\mu_k^{-1} = K / \norm{A}$.
\qquad\end{proof}

\begin{remark}
    \label{rem:previous-work}
    \textrm{
    Theorem~\ref{thm:wap-estimate} can also be proven by
    using the spectral equivalence of the deflated CG method and
    the CG method preconditioned by a $V(1,0)$-cycle of a multigrid method    
    with a Richardson smoother 
    with weight equal to one
    \cite[Theorem~3.3]{TNVEComparison2009}.
    Then Section~1 and Theorem~2.1 in \cite{NNFurther2013} after some
    algebraic simplifications, yield the above theorem.}
\end{remark}

\section{Perturbation of Deflation Subspaces}
\label{sec:inexact-deflation}

We now use Theorem~\ref{thm:wap-estimate} to give a bound
on the effective condition number when the deflation subspace is
perturbed.
This is of particular interest when the deflation subspace is spanned by
eigenvectors of the matrix $A$ which is common practice, see e.g.
\cite{AMNWDeflated2010,CWAnalysis1997,FVConstruction2001,GGLNFramework2013,%
SYEGDeflated2000,SPMConjugate1989,SOComputing2010}.
Typically the eigenvectors corresponding to the smallest
eigenvalues are unknown and need to be approximated numerically.  Thus the
question arises how precisely those eigenvectors need to be determined to
achieve fast convergence of the deflated CG method.
We first answer this question for the perturbation of general deflation
subspaces and later on discuss the case of deflation of eigenvectors. 

We need to quantify the perturbation of the deflation subspace to give a bound
on the effective condition number.
The difference between two subspaces $\mathcal{S}$ and
$\widetilde{\mathcal{S}}$ can be measured in terms of the largest principle
angle $\theta$---also called \emph{subspace gap}---between the subspaces
(cf.~\cite{GLMatrix1989,SSOccurrence2005}), i.e.,
\begin{equation}
    \label{eq:subspace-gap}
    \| \pi(\mathcal{S}) - \pi(\widetilde{\mathcal{S}}) \|_2
    = \sqrt{1 - \cos(\theta)^2} = \sin(\theta).
\end{equation}
Thus we can measure the perturbation as the angle between the perturbed and
the unperturbed subspace. This is a suitable measure as it does not
depend on the choice of the basis of the subspace.
In the following Lemma we give a bound on the weak approximation property
constant using the subspace angle $\theta$.

\begin{lemma}
    \label{lem:perturb-angles}
    Let $\mathcal{S} \subseteq \Kvs{n}$ and $\widetilde{\mathcal{S}} \subseteq
    \Kvs{n}$ be two subspaces of the same dimension. Assume that the space
    $\mathcal{S}$ fulfills the weak approximation property with constant $K$.
    Then $\widetilde{\mathcal{S}}$ fulfills the weak approximation property
    with constant
    \[
        \widetilde{K}
        \le \left( K^{1/2} + \sin(\theta) \cdot \sqrt{\kappa(A)} \right)^2
        \,,
    \]
    where $\theta$ is the largest principal angle between $\mathcal{S}$ and
    $\widetilde{\mathcal{S}}$.
\end{lemma}
\begin{proof}
    Let $x \in \Kvs{n}$. Then
    \begin{align}
            \notag
            \| x - \pi(\widetilde{\mathcal{S}}) x \|_2
        &=
            \| x + \pi(\mathcal{S})x - \pi(\mathcal{S})x 
                - \pi(\widetilde{\mathcal{S}})x \|_2 \\
        &\le
            \label{eq:subspace-proof-1}
            \| x - \pi(\mathcal{S})x \|_2 
            +   \| \pi(\mathcal{S})x - \pi(\widetilde{\mathcal{S}})x \|_2
        \,.
    \end{align}
    The subspace $\mathcal{S}$ fulfills the weak approximation property thus
    the first term of \eqref{eq:subspace-proof-1} is bound by
    \[
        \| x - \pi(\mathcal{S}) \|_2 
        \le K^{1/2} \cdot \tfrac{ \| x \|_A}{\| A \|_2^{1/2}}
        \,.
    \]
    We can bound the second term \eqref{eq:subspace-proof-1} by using the
    subspace gap \eqref{eq:subspace-gap} to get
    \begin{align*}
            \| \pi(\mathcal{S})x - \pi(\widetilde{\mathcal{S}})x \|_2
        &\le 
            \| \pi(\mathcal{S}) - \pi(\widetilde{\mathcal{S}}) \|_2 \cdot
            \| x \|_2 
        =
            \sin(\theta) \cdot \| x \|_2 \\
        &\le
            \sin(\theta) \cdot \tfrac{ \| x \|_A }{\lambda_n^{1/2}}
        =
            \sin(\theta) \cdot \tfrac{\lambda_1^{1/2}}{\lambda_n^{1/2}} 
            \cdot \tfrac{ \| x \|_A }{\| A \|_2^{1/2}}
        \,.
    \end{align*}
    Combining the two estimates yields
    \[
            \| x - \pi(\widetilde{\mathcal{S}}) x \|_2
        \le 
            \left(
                K^{1/2} + \sin(\theta) \cdot
                \tfrac{\lambda_1^{1/2}}{\lambda_n^{1/2}}
            \right)
            \cdot \tfrac{ \| x \|_A }{\| A \|_2^{1/2}}
        \,.
        \qedhere
    \]
\end{proof}

Using Theorem~\ref{thm:wap-estimate} we get a bound for the smallest
eigenvalue of the deflated matrix:

\begin{theorem}
    \label{thm:mu-min-inv-est}
    Let $\mathcal{S} \subseteq \Kvs{n}$ and 
    $\widetilde{\mathcal{S}} \subseteq \Kvs{n}$ be two subspaces of the same
    dimension. Let $\mu_k$ be the smallest non-zero eigenvalue of
    $A(I - \pi_A(\mathcal{S}))$ and
    $\tilde \mu_k$ the smallest non-zero eigenvalue of
    $A(I - \pi_A(\widetilde{\mathcal{S}}))$. Then
    \begin{equation}
        \label{eq:mu-min-inv-est}
        \tilde \mu_k^{-1} \le \left( \mu_k^{-1/2} + 
        \lambda_n^{-1/2} \cdot \sin(\theta)
        \right)^2
        =
        \mu_k^{-1} + \mathcal{O}(\theta), \quad
        \text{for $\theta \to 0$}
    \end{equation}
    where $\theta$ is the largest principal angle between $\mathcal{S}$ and
    $\widetilde{\mathcal{S}}$.
\end{theorem}

Recall that Theorem~\ref{thm:wap-estimate} stated that the largest eigenvalue
of the deflated matrix is smaller than the largest eigenvalue of the original
matrix $A$. Thus in combination with Theorem~\ref{thm:mu-min-inv-est} this
gives an estimate for the effective condition number.

We now turn our attention to the special case where $\tilde{\mathcal{S}}$ is spanned by
inexact eigenvectors.
Recall that we defined $q_1, q_2, \dots, q_n$ to be an orthonormal basis of eigenvectors
of the matrix $A$, s.t.\ $\lambda_1 \ge \lambda_2 \ge \dots \ge \lambda_n$ are
the corresponding eigenvalues.
Furthermore, 
let $\mathcal{S}$ be the space spanned by the eigenvectors corresponding to
the $(n-k)$ smallest eigenvalues, i.e.,
$\mathcal{S} = \range V$, where
\[
    V = [ q_{k+1} | q_{k+2} | \dots | q_n ]
    \,.
\]

To apply Theorem~\ref{thm:mu-min-inv-est}
we now determine the value for $K$, such that the weak approximation
property \eqref{eq:weak_approx} for $\mathcal{S}$ is fulfilled.
Let $x \in \Kvs{n}$ be given
and $ x = \sum_{i=1}^n \xi_i q_i $
its expansion
in terms of the orthonormal eigenvectors $q_i$ of $A$.
Then the orthogonal projection $\pi(\mathcal{S}) x$ of $x$ onto $\mathcal{S}$
fulfills $\pi(\mathcal{S}) x = \sum_{i=k+1}^n \xi_i q_i$ and thus
\[
    \norm{ x - \pi(\mathcal{S}) x }^2
    = \norm{ \sum_{i=1}^k \xi_i q_i + \sum_{i=k+1}^n (\xi_i - \xi_i) q_i }^2
    = \sum_{i=1}^k |\xi_i|^2
    \,.
\]
This yields
\begin{equation} \label{eq:exact-ev-wap}
    \Anorm{x}^2 
    = \sum_{i=1}^n |\xi_i|^2 \lambda_i
    \ge \sum_{i=1}^k | \xi_i |^2 \lambda_i
    \ge \lambda_k \sum_{i=1}^k |\xi_i|^2
    = \lambda_k \, \norm{x - \pi(\mathcal{S}) x}^2
    \,.
\end{equation}
Hence the weak approximation property \eqref{eq:weak_approx} holds
with $K \le \tfrac{\norm{A}}{\lambda_k} = \tfrac{\lambda_1}{\lambda_k}$.
This is also the smallest possible constant that fulfills the weak
approximation property as \eqref{eq:exact-ev-wap} is an equality for
$x = q_k$. Thus $K = \tfrac{\lambda_1}{\lambda_k}$.
Using Theorem~\ref{thm:wap-estimate}
we obtain $\kappa_\eff \le \tfrac{\lambda_1}{\lambda_k}$.
Furthermore it is known that
$\kappa_\eff = \tfrac{\lambda_1}{\lambda_k}$
(see, e.g.~\cite[Section~1]{FVConstruction2001})
hence our bound is sharp.

Now we are in the position to formulate a new result
concerning the deviation of the effective condition number from
$\tfrac{\lambda_1}{\lambda_k}$ due to inexact eigenvectors.
We apply Theorem~\ref{thm:mu-min-inv-est} and Theorem~\ref{thm:wap-estimate}
to obtain the following proposition.

\begin{proposition}
    \label{prop:eigenvalue-perturbation}
    Let $q_1, \dots, q_n$ be an orthonormal basis of eigenvectors
    corresponding to the eigenvalues
    $\lambda_1 \ge \dots \ge \lambda_n \ge 0$
    of $A$ and let the subspace
    $\mathcal{S} := \Span\{ q_{k+1}, \dots, q_n \}$.
    Then the effective condition number of the deflated system with respect to
    the deflation subspace $\widetilde{\mathcal{S}}$ fulfills
    \begin{equation}
        \label{eq:kappa_eff_perturbed}
        \kappa_\eff 
        \le
        \left( 
            \sqrt{\tfrac{\lambda_1}{\lambda_k}} +
            \sqrt{\tfrac{\lambda_1}{\lambda_n}} \cdot
            \sin(\theta)
        \right)^2
        = \tfrac{\lambda_1}{\lambda_k}
        + \mathcal{O}(\theta)
        \,,
        \quad \text{for $\theta \to 0$}
        \,,
    \end{equation}
    where $\theta$ is the largest principal angle between
    $\mathcal{S}$ and $\widetilde{\mathcal{S}}$.
\end{proposition}

Proposition~\ref{prop:eigenvalue-perturbation} shows that the effective
condition number deviates asymptotically linearly from the unperturbed
effective condition number.  In addition it indicates that the accuracy of the
deflated eigenvectors should increase linearly with the condition number if we
aim at keeping the effective condition number within a given factor of its
optimal value $\tfrac{\lambda_1}{\lambda_k}$.

\begin{remark}
    For some applications the largest principal angle is not a natural measure
    for the perturbation of the deflation subspace.
    Under the following assumption there is a simple bound for the largest
    principal angle.
    Let $V \in \Kmvs{n}{n-k}$ with orthonormal columns and $\mathcal{S} :=
    \range V$.
    Furthermore, let $E \in \Kmvs{n}{n-k}$ and
    $\widetilde{\mathcal{S}} := \range(V+E)$.
    Then the largest principal angle $\theta$ between
    $\mathcal{S}$ and $\widetilde{\mathcal{S}}$ fulfills
    \[
        \sin(\theta) \le \| E \|_2
        \,.
    \]
    In other words if the deflation subspace is given by an orthonormal basis
    and we can bound the norm of the difference of this orthonormal basis and
    a basis of the perturbed deflation subspace then we can bound $\theta$.
\end{remark}
\begin{proof}
    We have \cite[Theorem 5.5]{SSMatrix1990}, 
    \cite[I.~Theorem~6.34]{KatPerturbation1976} that
    \begin{align*}
            \sin(\theta)
        &=  \| \pi(\mathcal{S}) - \pi(\widetilde{\mathcal{S}}) \|_2
        =   \| (I - \pi(\widetilde{\mathcal{S}})) \pi(\mathcal{S}) \|_2
        \,.
        \intertext{It then follows that}
            \sin(\theta)
        &=  
            \max_{\substack{\| x \| = 1 \\ x \in \mathcal{S}}}
            \min_{y \in \widetilde{\mathcal{S}}}
            \| x - y \|_2
        \le
            \max_{\substack{\| x \| = 1 \\ x \in \mathcal{S}}}
            \| x - (V+E)V^* x \|_2
        \,.
        \\
        \intertext{Every $x \in \mathcal{S}$ with $\| x \|$ can be written as
        $x = V z$ with $\| z \| = 1$. Thus}
            \sin(\theta)
        &\le
            \max_{\| z \| = 1}
            \| V z - (V+E)V^* V z \|_2
        =
            \max_{\| z \| = 1}
            \| V z - Vz + Ez \|_2
        \\
        &=  \max_{\| z \| = 1}
            \| Ez \|_2
        = \| E \|_2
        \,.
        \qedhere
    \end{align*}
\end{proof}

\section{Accuracy of the Deflating Projection}
\label{sec:accuracy-discussion}

The deflated CG method involves the solution of the \emph{inner linear system}
\begin{equation}
    \label{eq:inner-system}
    (V^* A V) z_{i+1} = V^* A r_{i+1}
\end{equation}
in every iteration. In the situation where $\mathcal{S}$ is of large
dimension, it can be desirable
to solve the inner system \eqref{eq:inner-system} inexactly by an iterative
method, the \emph{inner iteration}.
This is for example the case when many eigenvectors are to be deflated,
or, more generally when the deflation subspace is large 
(and represented by a basis of sparse vectors thus making the use of an inner
iteration more attractive, see~\cite{FVConstruction2001,LuesLocal2007}).
In this context, it has been observed in \cite{NVComparison2006} that deflation
methods are quite sensitive to the accuracy of the inner iteration.

Given a stopping criterion for the outer iteration, i.e.,
\begin{equation}
    \label{eq:stopping-outer-iter}
    \norm{ r_i } \le \tau\, \norm{ b } =: \varepsilon
\end{equation}
for some $0 < \tau \ll 1$,
we now want to specify a stopping criterion for the inner iteration
which is of the form
\begin{equation}
    \label{eq:inner-stopping-criterion}
    \norm{ r_i^\textrm{c} } \le \tau^\textrm{c}\, \norm{ b^\textrm{c} }
    \,.
\end{equation}
The experiments in \cite{NVComparison2006} suggest that it is sufficient
to set a \emph{fixed tolerance}
\begin{equation}
    \label{eq:fixed-tolerance}
    \tau^\textrm{c} = \varepsilon \cdot c  \quad \text{with} \quad 0 < c \le 1.
\end{equation}
In our experiments we observed that this accuracy requirement is only crucial
for the first iterations and can be relaxed later on. This can be explained by
the theory for inexact Krylov subspace methods from
\cite{SSTheory2003,ESInexact2004} as follows.

We first observe that within the deflated CG method the matrix $A (I - \pi_A(\mathcal{S}))$ is only
used to compute matrix vector products, requiring the solution of the inner system \eqref{eq:inner-system}.
Due to the fact that the deflated matrix can be written as
\[
    A (I - \pi_A(\mathcal{S}))
    = A - A V (V^* A V)^{-1} V^* A
    \,.
\]
We can interprete the inexact calculation of 
$(V^* A V)^{-1}$ as the inexact matrix vector product by replacing 
\[
    (V^* A V)^{-1}
    \quad\text{by}\quad
    (V^* A V)^{-1} + \Delta_i
\]
for some perturbation matrix $\Delta_i$---different in every iteration. Then
\begin{align*}
        A - A V ((V^* A V)^{-1} + \Delta_i) V^* A)
    &=
        A - A V (V^* A V)^{-1} V^* A -
        \underbrace{A V \Delta_i V^* A}_{=: E_i}
    \\
    &=
        A(I - \pi_A(\mathcal{S})) - E_i
    \,.
\end{align*}
Hence, the inaccurate solution of \eqref{eq:inner-system} is essentially equivalent
to an inaccurately computed matrix vector product.
(For a more general treatment of the perturbation of projections see
\cite{SteNumerical2011}.)

We start our discussion by considering the full orthogonalization method (FOM)
\cite{SaaIterative2003} which is equivalent to CG in exact arithmetic. (The
first vector should be computed with full accuracy.)
Assume we run FOM where the $i$th matrix vector
product is replaced by a product with the matrix
$A(I - \pi_A(\mathcal{S})) - E_i$. According to~\cite{SSTheory2003} the method converges to the desired
tolerance $\varepsilon$ if
\[
    \| E_i \| < C \tfrac{1}{\| r_i \|} \varepsilon
\]
for some constant $C > 0$ 
(see also \cite{ESInexact2004}) and it has been observed in~\cite{BFInexact2005} that the rate of convergence does not
change by much even when choosing $C = 1$.

In particular we obtain due to $\| E_i \| \le \| A V \|^2 \cdot \| \Delta_i \|$ that 
the norm of $E_i$ is bounded by a factor proportional to the norm of $\Delta_i$. In turn the norm of
$\Delta_i$ can be bounded by a value that is proportional to the
error in the computation of $(V^* A V)^{-1} b^c$ and this error can be bounded by
a value proportional to the norm of the residual. Hence there exists a
constant $c > 0$ such that
\[
    \label{eq:adaptive-tolerance}
    \tau^\mathrm{c} = c \tfrac{1}{\| r_i \|} \varepsilon
\]
is sufficient for FOM to converge to the desired tolerance.
We will refer to this as the \emph{adaptive tolerance}, where
$r_i$ is the residual of the outer iteration.

Equation \eqref{eq:adaptive-tolerance} means
that the relative tolerance for the inner iteration can be relaxed 
while the outer iteration advances.

In exact arithmetic FOM is equivalent to the CG method. When using inexact CG
the loss of orthogonality can become a problem. Thus additional
orthogonalization might be required as in the flexible CG method
\cite{NotFlexible2000}.

\section{Numerical Experiments}
\label{sec:experiments}

This section contains numerical experiments to illustrate the developed
theory and the quality of our convergence estimates.
We use the following three test matrices called
\emph{Simple}, {\em Poisson} and {\em NOS1}.
\begin{description}
    \item[Simple]
        The {\em Simple} matrix is the
        diagonal matrix
        $A = \diag(10^{-2}, 1, \dots, 1) \in \Rmvs{100}{100}$
        with eigenvalues $\lambda_{100} = 10^{-2}$ and $\lambda_1 = \dots =
        \lambda_{99} = 1$.

        The convergence rate of the (deflated) CG method is invariant under
        simultaneous unitary transformations of the matrix, right hand side,
        and initial guess. Thus without loss of generality we can consider
        diagonal matrices.

        The condition number of the Simple matrix is 100. If the deflation
        subspace is spanned by the eigenvector corresponding to the
        smallest eigenvalue $e_1$ then the condition number of the deflated
        matrix is $1$. Any deflation subspace that is orthogonal to $e_1$
        yields an effective condition number of $100$.

        If we pick a perturbation orthogonal to $e_1$ and increase its size
        step by step, the condition number should gradually go from $1$ to
        $100$. This will give us a first impression of the behavior of the
        bounds.
    \item[Poisson]
        The {\em Poisson$(N)$} matrix is the $N^2 \times N^2$ matrix
        arising from the finite element discretization of Poisson's
        equation using quadratic bilinear elements on a uniform $N \times N$
        grid. It is one of the simplest, non-trivial matrices
        that appear in real world applications.
    \item[NOS1] The {\em NOS1} matrix is the $237 \times 237$ 
        matrix from \cite{MatrixMarket}.
        According to its description, it is a 
        ``finite element approximation to [the] biharmonic 
        operator on a beam with one end
        free and one end fixed.''
        We choose this as our third example as it has a large
        condition number of about $2 \cdot 10^{7}$.
\end{description}
From the different mathematical equivalent formulations of the deflated CG
method
\cite{GGLNFramework2013} we chose the one from
Saad, Yeung, Erhel and Guyomarc'h \cite{SYEGDeflated2000}.
Throughout this section we refer to the \emph{optimal condition number} 
as the \emph{effective condition number} of the deflated matrix for the
unperturbed subspace $\mathcal{S}$. 
We refer to the \emph{original condition number} 
as the condition number of the matrix $A$.
We refer to the \emph{effective condition number}
as the effective condition number of the deflated matrix for the
perturbed subspace $\widetilde{\mathcal{S}}$.
Analogously the optimal, original and effective smallest eigenvalues is the
smallest non-zero eigenvalue of $A(I-\pi_A(\mathcal{S})$, $A$ and
$A(I-\pi_A(\widetilde{\mathcal{S}}))$, respectively.

\subsection{Perturbation in Eigenvector Deflation}\label{sec:pertubation}

In this section we illustrate the perturbation theory
from
Section~\ref{sec:inexact-deflation}.
We denote the eigenvalues of the matrix $A$ under consideration
by $\lambda_1 \ge \lambda_2 \ge \dots \ge \lambda_n \ge 0$ 
with corresponding eigenvectors $q_1, \ldots, q_n$.

\begin{figure}
    \adjustbox{width=0.49\textwidth}{\input{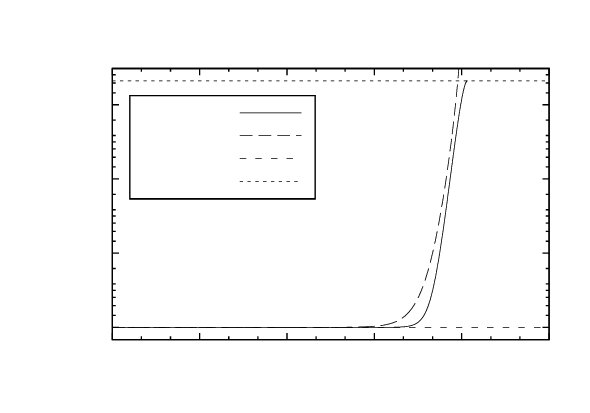}}
    \hfill
    \adjustbox{width=0.49\textwidth}{\input{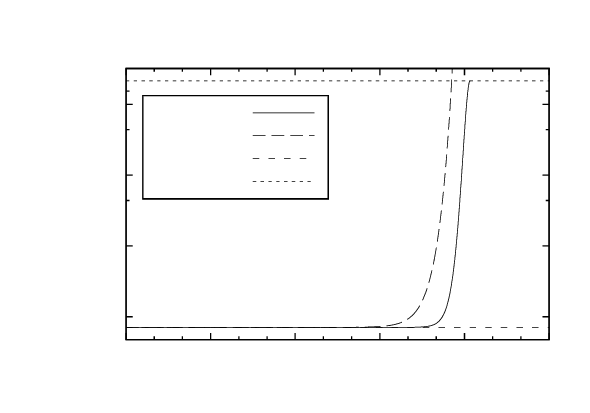}}

    \caption{Estimated and computed effective condition number for the Simple
    (left) and Poisson($31$) (right) matrix as a function of the principal
    angle $\theta$.}
    \label{fig:est-simple-poisson-k1}
\end{figure}

First, we consider the {\em Simple} matrix and choose
$\mathcal{S} = \Span\{ q_n \}$.
We choose the perturbation vector $E = \alpha q_{n-1}$, $\alpha \in
\mathbb{R}$,
thus $\tilde{\mathcal{S}} = \Span\{ q_n + \alpha q_{n-1} \}$ is the
perturbed deflation space.
This choice of perturbation has a large impact on the effective condition number.
Thus it will illustrate the sharpness of the developed bound.
We compute the effective condition number of the matrix
$A (I - \pi_A(\tilde{\mathcal{S}}))$ and the estimate from
Section~\ref{sec:inexact-deflation}.
The results are given in the left part of Figure~\ref{fig:est-simple-poisson-k1}.
Here the graph ``effective'' shows the effective condition number, the
graph ``eff.\ est.''\ shows the value from \eqref{eq:kappa_eff_perturbed}.
We note that the estimate recovers the behavior of the actual effective condition
number quite well.

\begin{figure}
    \adjustbox{width=0.49\textwidth}{\input{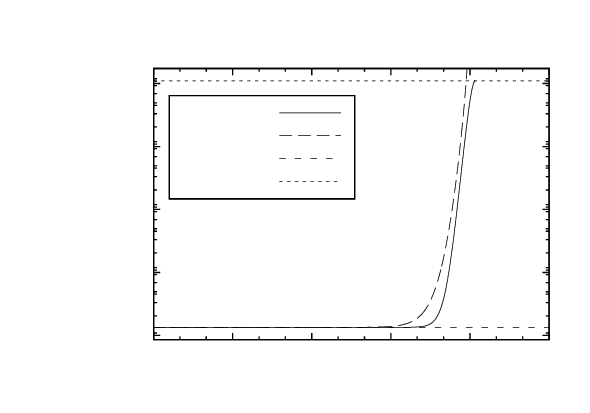}}
    \hfill
    \adjustbox{width=0.49\textwidth}{\input{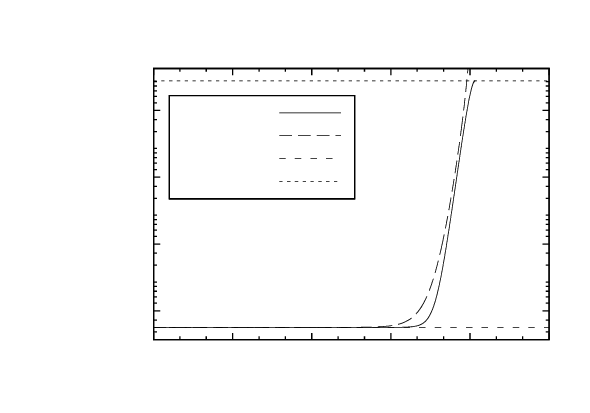}}

    \caption{Estimated and computed effective condition for the
    Poisson($31$) matrix, deflating $100$~eigenvectors, (left) and NOS1 matrix, 
    deflating $25$ eigenvectors, (right)
    as a function of the principal angle $\theta$.}
    \label{fig:est-poisson31-nos1-10-percent}
\end{figure}

We run the same test for the Poisson($31$) matrix
and report the results in the right
part of Figure~\ref{fig:est-simple-poisson-k1}.
Qualitatively the results are very similar to those for the matrix Simple
with the difference between the estimate and the effective
condition number being slightly larger. 
Deflating the
eigenvectors corresponding to the $k$ smallest eigenvalues, i.e.,
the subspaces $\mathcal{S} = \Span\{ q_{n - (k - 1)}, \dots, q_n \}$
and 
$E = \alpha \left[ q_{n - 2(k - 1)} | \dots | q_{n - (k-1) - 1} \right]$
yields similar results as shown in
Figure~\ref{fig:est-poisson31-nos1-10-percent};
we choose a deflation subspace consisting of approximately $n/10$
eigenvectors corresponding to the smallest eigenvalues of the
Poisson$(31)$ and the NOS$1$ matrix, respectively.

\begin{figure}
    \begin{center}
    \adjustbox{width=0.49\textwidth}{\input{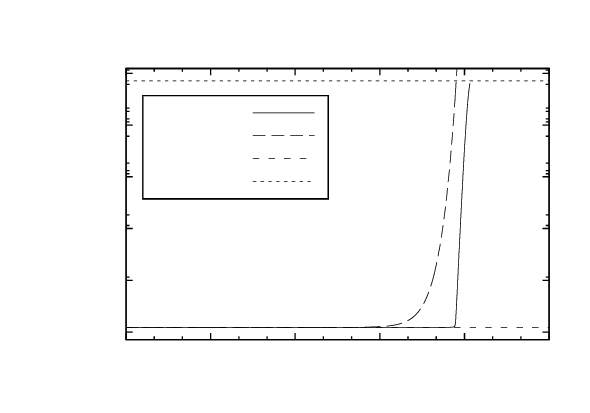}}
    \end{center}
    \caption{Estimated and computed effective condition for the
        Poisson($31$) matrix, $4$ eigenvectors and a random perturbation
        as a function of the principal angle $\theta$.}
    \label{fig:est-poisson31-k4-randn}
\end{figure}

The perturbations chosen so far had a relatively large effect on the effective
condition number.  To illustrate that we conduct a test with a random
perturbation matrix $E = \alpha R$. In here the entries of $R$ are normally
$\mathcal{N}(0,1)$ distributed random numbers. In
Figure~\ref{fig:est-poisson31-k4-randn} we can see that the theoretical bound
is not as sharp as in the previous examples, but the qualitative behavior is
again captured.

\begin{figure}
	\begin{center}
		\adjustbox{width=0.49\textwidth}{\input{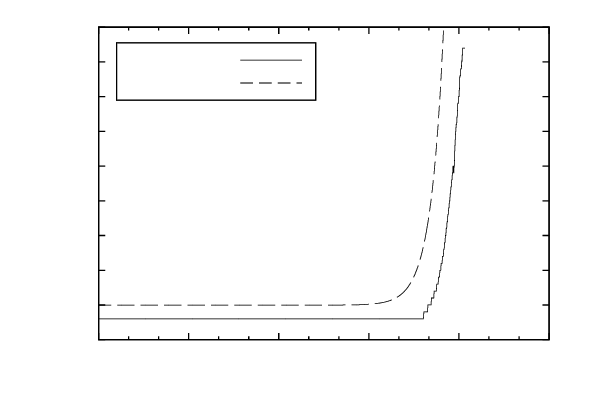}}
	\end{center}
	\caption{Number of iterations of the deflated CG method for the
        Poisson$(31)$ matrix in dependence of the size of the perturbation.
        Deflation of the $50$ smallest eigenvectors.
        Reduction of the norm of the residual by a factor of $10^{-6}$.
        }\label{fig:iterationNr-aggregation}
\end{figure}

Based on the convergence bound~\eqref{eq:convergence_estimate_dCG} of deflated
CG one can easily derive an estimate for the number of iterations deflated CG
requires to reduce the $\ell_2$-norm of the residual to a prescribed tolerance
$\tau$ in
terms of $\kappa_\mathrm{eff}$. To this extend first observe that due to 
\eqref{eq:convergence_estimate_dCG}
\begin{equation*}
	\| r_i \|_2
	\le 2  \sqrt{\kappa} \left( \frac{ \sqrt{\kappa_\eff} - 1 }{ \sqrt{\kappa_\eff} + 1} \right)^i
    \| r_0 \|_2
	\quad \text{for } i = 0, 1, 2, \ldots
\end{equation*}
and thus
\[
    i \ge \frac{\log(\tau / (2 \sqrt{\kappa}))}{
        \log ( (\sqrt{\kappa_\eff} - 1 ) / ( \sqrt{\kappa_\eff} + 1) ) }
\]
to reduce the $\ell_2$-norm of the residual by a factor of $\tau$.

In order to illustrate that our bound for
$\kappa_{\mathrm{eff}}$ can be used as a tool to set the required accuracy of
eigenvectors used in the deflation subspace, we report in
Figure~\ref{fig:iterationNr-aggregation} the number of iterations deflated CG
required to converge as a function of the perturbation of the smallest $k=50$
eigenvectors. As one can see the estimate gives a fairly good idea about the
accuracy requirement, i.e., it correctly captures the steep increase of the
number of iteration. We stop the iteration when the norm of the residual is
reduced by a factor of $10^{6}$.

\begin{figure}
    \begin{center}
        \adjustbox{width=0.49\textwidth}{\input{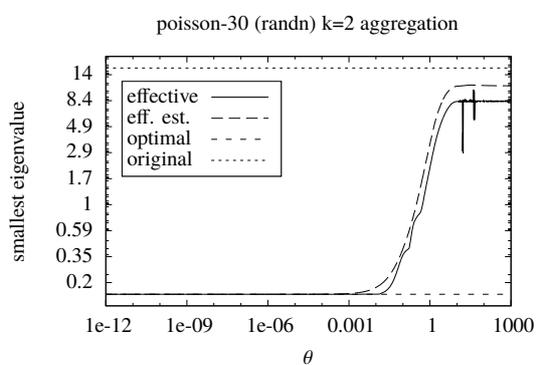}}
    \end{center}
    \caption{Aggregation with $k=2$ for Poisson$(30)$, random
    perturbation. Estimated and measured $\mu_k^{-1}$.}
    \label{fig:poisson-aggregation}
\end{figure}

As a final experiment we consider a deflation subspace that is not directly
spanned by eigenvectors.
We consider the Poisson($30$) matrix and construct $V$ as described in 
\cite{LuesLocal2007}.
That is, we take eigenvectors to the smallest two eigenvalues and ``chop''
them up over $2\times 2$ aggregates, i.e., the $30\times30$ grid is divided
into $2\times 2$ blocks and the $15^2\cdot2$ columns of $V$ are simply the
entries of the eigenvectors restricted to these blocks. 
The perturbed basis $\widetilde{V}$ is obtained by repeating the same
procedure with the same eigenvectors that are perturbed by two
scaled $\mathcal{N}(0,1)$ random vectors.
In Figure~\ref{fig:poisson-aggregation} we show
the estimate for $\mu_{k}^{-1}$ together with the actual $\mu_{k}^{-1}$ in
dependence of the largest principal angle between the subspace spanned by the
columns of $V$ and $\widetilde{V}$.
Again we
see that the qualitative behavior is captured well by the estimate, but as
expected the gap between estimate and actual value of $\mu_{k}^{-1}$ is larger
than in the cases where $V$ is built of orthonormal vectors.

\begin{figure}
    \begin{center}
        \adjustbox{width=0.49\textwidth}{\input{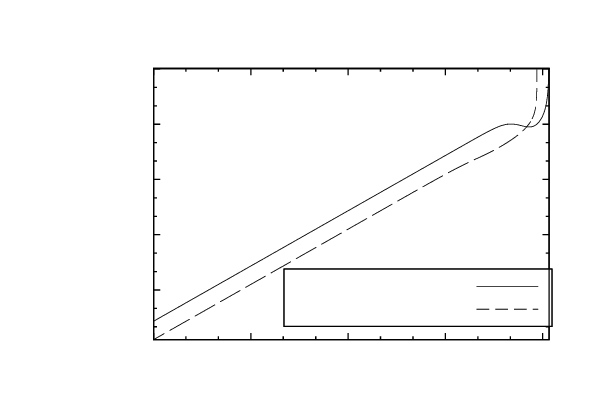}}
        \hfill
        \adjustbox{width=0.49\textwidth}{\input{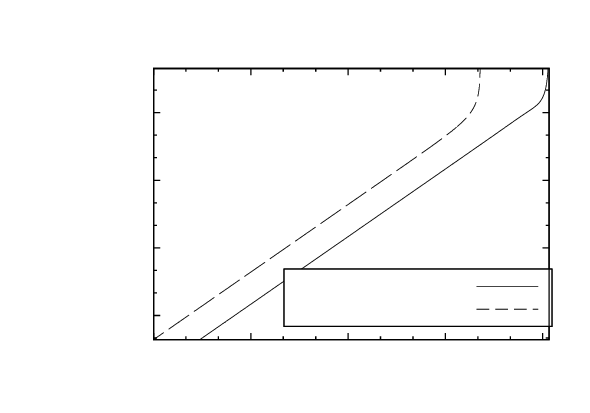}}
    \end{center}
    \caption{The relative error of the eigenvalue bound from
    \protect\cite[Lemma~3.30]{GauRecycling2014} and
    Proposition~\ref{prop:eigenvalue-perturbation}
    for the Simple (left) and Poisson(31) (right) matrix.}
    \label{fig:quality-of-bounds}
\end{figure}

In \cite[Lemma~3.30]{GauRecycling2014} a bound for the difference of the eigenvalues of the deflated matrix in the
perturbed and in the unperturbed case is given. If the effective
condition number of the unperturbed deflated matrix is known then a bound for
the effective condition number in the perturbed case is easily derived. We
compare this bound with the result from
Proposition~\ref{prop:eigenvalue-perturbation}.
To this extend we compare the relative error, i.e.,
\[
    \frac{| \text{bound} - \kappa_\eff |}{\kappa_\eff}
    \,.
\]
of the two bounds in Figure~\ref{fig:quality-of-bounds}.
The bound of \cite{GauRecycling2014} is better than the bound from
Proposition~\ref{prop:eigenvalue-perturbation} for the Simple matrix. For the
Poisson$(31)$ matrix the situation is the other way around.

\subsection{Accuracy of the Inner System}

\begin{figure}
    \adjustbox{width=0.49\textwidth}{\input{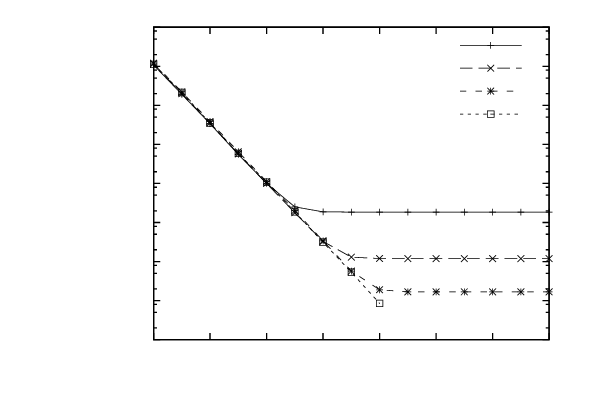}}
    \hfill
    \adjustbox{width=0.49\textwidth}{\input{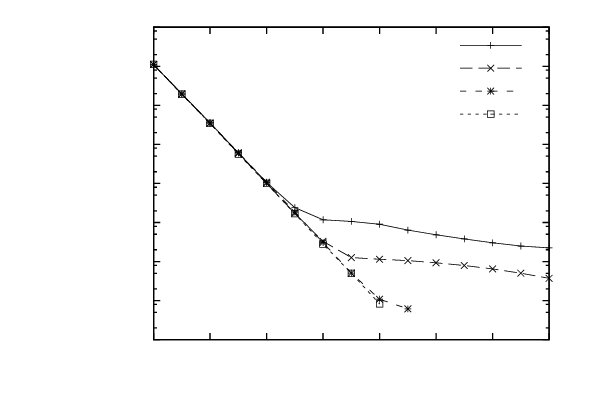}}
    \caption{Deflation of $200$ eigenvectors of the Poisson($31$) matrix
    with fixed (left) and adaptive (right) inner tolerance.}
    \label{fig:different-abs-tolerance}
\end{figure}

We now consider the Poisson($31$) matrix and choose
the columns of $V$ as the $200$ eigenvectors corresponding to the smallest
eigenvalues of the Poisson($31$) matrix.
In Figure~\ref{fig:different-abs-tolerance} we show the
convergence of the deflated CG iteration for varying inner
accuracy requirements, i.e., the accuracy of the solution of
$V^* A Ve = V^* A r$ approximated by an inner CG iteration. 
The left hand side of the figure shows the norm of the residual for
different choices of $\tau^c$ in \eqref{eq:inner-stopping-criterion};
the right hand side shows these norms for different choices of 
$\tau^c \norm{r_i}$; see \eqref{eq:adaptive-tolerance}.

From the discussion in section~\ref{sec:accuracy-discussion} we expect that
the adaptive tolerance \eqref{eq:adaptive-tolerance} will produce similar
results to the fixed one \eqref{eq:fixed-tolerance} and this can indeed
be observed in
Figure~\ref{fig:different-abs-tolerance}.
Choosing the tolerance adaptively produces even slightly better results.

In numerical experiments for other choices for the deflation subspace $V$
which we do not report here, we observed no qualitative difference of the two
choices for the inner tolerance criterion.

\begin{table}
    \caption{Number of inner iterations for both tolerance selection
    strategies ($\tau = 10^{-6}$, $c = 0.1$).
    }
    \begin{center}
    \begin{tabular}{@{}llrr@{}}
        \toprule
        It. & 
        \multicolumn{1}{c}{$\norm{r_i}$} &
        \multicolumn{2}{c}{Inner-It.} \\
        \cmidrule(l){3-4}
            &                    & Fixed & Adaptive \\
        \midrule
        0   & \scinum{8.21}{0}   & 35    & 35 \\
        1   & \scinum{1.15}{0}   & 27    & 27 \\
        2   & \scinum{2.15}{-1}  & 27    & 24 \\
        3   & \scinum{3.93}{-2}  & 27    & 20 \\
        4   & \scinum{6.41}{-3}  & 27    & 16 \\
        5   & \scinum{1.13}{-3}  & 27    & 11 \\
        6   & \scinum{1.96}{-4}  & 27    &  8 \\ 
        7   & \scinum{3.25}{-5}  & 27    &  4 \\
        8   & \scinum{5.70}{-6}  & 27    &  2 \\
        9   & \scinum{9.15}{-7}  & 27    &  0 \\
        \bottomrule
    \end{tabular}
    \end{center}

    \label{tbl:inner-it}
\end{table}

Motivated by this observation 
we run the same test again
using the adaptive tolerance
\eqref{eq:adaptive-tolerance}
as the stopping criterion
and monitor the number of inner iterations in each outer iteration.
We choose
$c = 0.1$
and solve to a tolerance of 
$\tau = 10^{-6}$.
As the results in Table~\ref{tbl:inner-it} show, less and less iterations of
the inner CG method are needed when the outer iteration advances and the
adaptive tolerance is used.  For the fixed tolerance a high number of inner
iterations is required throughout the whole outer iteration.  We note that
the convergence is not distinguishable from the case where an exact inner
solve would be performed. 

\section{Conclusions}

We have shown that to get an optimal convergence rate of the deflated CG
method the eigenvectors only have to be computed to a limited accuracy.  This
is particularly interesting in situations where a linear system has to be
solved for many right hand sides and the additional overhead of computing
approximations, e.g.\ by using ARPACK~\cite{LSYARPACK1998}, of the smallest
eigenvectors can be compensated.  This is for example the case when computing
rational approximations of matrix functions~\cite{FSMatrix2008a} or in the
computation of expectation values in statistical physics, e.g., Lattice Gauge
theory~\cite{GLBQuantum2010}.

\section*{Acknowledgments}

We would like to thank the referees for many helpful remarks and suggestions and
Andreas Frommer for his help and advice.
Furthermore, the work 
is partly supported by the German Research
Foundation (DFG) through the Priority Programme 1648 
``Software for Exascale Computing'' (SPPEXA)
and the Transregional Collaborative Research Centre 55 (SFB/TRR55)
``Ha\-dron Physics from Lattice QCD''.

\ifx\preprint\undefined
\bibliographystyle{siam}
\bibliography{defl_mg_theo}
\else
\printbibliography
\fi

\label{end-of-document}

\end{document}